\def\N{\mathbb N}
\def\Z{\mathbb Z}
\def\C{\mathbb C}
\theoremstyle{definition}
	\newtheorem{definition}{Definition}[section]
	\newtheorem{example}{Example}
	\newtheorem{lemma}{Lemma}
	\newtheorem{theorem}{Theorem}
\newcounter{tmp}
	\newtheorem{corollary}{Corollary}
	\newtheorem{remark}{Remark}
\theoremstyle{remark}
\def \Cheng  {Cheng-Minkowycz} 
\author[1,2,3]{Robert Conte}
\author[3]{Tuen-Wai Ng}
\author[1]{Chengfa Wu*}
\affil[1]{Institute for Advanced Study, Shenzhen University, Shenzhen, PR China}
\affil[2]{Universit\'e Paris-Saclay, ENS Paris-Saclay, CNRS, Centre Borelli, F-91190, Gif-sur-Yvette, France}
\affil[3]{Department of Mathematics,
The University of Hong Kong,
Pokfulam Road,
Hong Kong}
\title{Closed-form meromorphic solutions of some third order boundary layer ordinary differential equations}
\date{}
\begin{document}

\maketitle

\begin{figure}[b]
\rule[-2.5truemm]{5cm}{0.1truemm}\\[2mm]
{\footnotesize
2020 {\it Mathematics Subject Classification. Primary:} 35Q85, 35A24, 35C09.
\par {\it Key words and phrases.}
Meromorphic solutions,
Complex differential equations,
Painlev\'{e} analysis,
Nevannlina theory,
Wiman-Valiron theory.
\par
* Corresponding author: C. F. Wu (Chengfa Wu)
}

\end{figure}

\abstract{
We introduce a general third order non-linear autonomous ODE which covers many ODEs coming from boundary layer problems,
like the Falkner-Skan equation and the \Cheng\ equation. Using Wiman-Valiron theory and complex analytic methods recently developed, for the generic cases, it is shown that all their meromorphic solutions must be rational, or rational in one exponential, and then we find all of them explicitly. For a few non-generic cases, some solutions, which are meromorphic or singlevalued, are also obtained. Our results also explain why it is so difficult to obtain new closed-form solutions of the Falkner-Skan equation.
}





\section{Introduction}

In this paper, we use complex analytic methods to find  meromorphic solutions of an equation which covers many ODEs stemming from boundary layer problems,
such as the   Falkner-Skan equation \cite{Falkner1931Skan} and the \Cheng\ equation \cite{Cheng1977Minkowycz}, namely
the following third order non-linear ODE
 \begin{eqnarray}
u'''(z) - B u(z) u''(z) - A {u'(z)}^2 +\alpha u''(z)+ \beta u'(z) + \gamma u(z) +\delta=0,\
A, B,  \alpha, \beta, \gamma,  \delta \in \mathbb{R}. 
\label{third-order-equation}
\end{eqnarray}

Boundary layer equations    \cite{DrazinRiley,Schlichting2017Gersten}
play    quite an important role in fluid mechanics and engineering. They arise as simplifications  of the Navier-Stokes equations for fluids  near solid boundaries with sufficiently high Reynolds numbers, and have many applications in industry especially  the design of airships and airplanes \cite{Pandya2014HuangEspitiaUranga}. Various numerical methods \cite{Keller1978,Skote2002Henningson} have been adopted to study boundary layer equations, such as the Crank-Nicolson scheme,   the Box scheme and their variations (see \cite{Keller1978,Riccardo1998} and  references therein). Despite that, it is generally
very difficult to construct exact solutions of the boundary layer equations. Fortunately, when we  focus on the so-called similarity solutions \cite{Hartman1972}, which have the property that  the velocity profiles at different positions around the solid boundaries  are the same apart from a scaling, the boundary layer (partial differential) equations can be  simplified significantly and in certain cases they may be transformed into ordinary differential equations. In particular, if we consider the steady two-dimensional  flow past a wedge of angle $\pi \lambda $, then through  similarity   transformations, the boundary layer  equations reduce to an ordinary differential equation subject to some boundary conditions
\begin{eqnarray}
f'''(\eta) + f (\eta) f''(\eta) + \lambda (1-{f'(\eta)}^2)=0, \label{eqFalkner-Skan-original}
 \\
f(0)= f'(0)=0,\ f'(+\infty)=1, \label{boundary-condition-Falkner-Skan-original} 
\end{eqnarray}
which is called the Falkner-Skan equation \cite{Falkner1931Skan,Llibre2013Valls}. Here,   $\eta $ is the similarity variable 
and $f (\eta)$ is the similarity stream function. For $\lambda = 0$,  the boundary layer is around a flat plate   parallel to a unidirectional flow with constant velocity, and the equation \eqref{eqFalkner-Skan-original} reduces to the Blasius equation \cite{Blasius1908,Boyd2008,Iacono2015Boyd}.
The famous Falkner-Skan equation \eqref{eqFalkner-Skan-original} has been studied intensively by many people (including mathematicians like Weyl \cite{Weyl1942} as well as Swinnerton-Dyer and his collabrators \cite{Swinnerton-Dyer1995Sparrow,Sparrow2002Swinnerton-Dyer}) since 1931. One now knows that there exists a critical value $\lambda_0 = -0.1988 \pm 0.0005$ defining three regimes
of solutions of \eqref{eqFalkner-Skan-original} obeying \eqref{boundary-condition-Falkner-Skan-original}
\cite{Falkner1931Skan,Hartree1937,Stewartson1954}:
\begin{itemize}
  \item [(i)] no  solution  for $\lambda < \lambda_0$;
  \item [(ii)] two solutions for $\lambda_0 < \lambda < 0$;
  \item [(iii)] one solution  for $0 < \lambda$.
\end{itemize}
The rigorous treatment of the solvability of the boundary value problem \eqref{eqFalkner-Skan-original} and  \eqref{boundary-condition-Falkner-Skan-original} was given by Weyl \cite{Weyl1942}. He proved that, for any $\lambda \geq 0$, there exists a solution $f(\eta)$ of \eqref{eqFalkner-Skan-original} and  \eqref{boundary-condition-Falkner-Skan-original} with the property that $f'$ and $f''$ are respectively increasing and decreasing  on $(0, \infty)$. A direct proof of this result was given by Coppel \cite{Coppel1960} whose argument   shows that the equation \eqref{eqFalkner-Skan-original} is solvable as well for the boundary conditions
\begin{eqnarray} \label{boundary-condition-Falkner-Skan-general}
f(0)= a, f'(0)=b,\ f'(+\infty)=1, \quad a<0, b<0. 
\end{eqnarray}
Apart from that, intensive study has been taken on  the properties of  solutions of \eqref{eqFalkner-Skan-original} with various  initial or boundary conditions, ranging from the existence of periodic solutions \cite{Hastings1987Troy} and multiple solutions \cite{Riley1989Weidman}, oscillation of solutions \cite{Hastings1987Troy,Hastings1988Troy} to the solution dynamics and bifurcations \cite{Swinnerton-Dyer1995Sparrow,Sparrow2002Swinnerton-Dyer}.  Although the existence of solutions of \eqref{eqFalkner-Skan-original} has been verified, very few exact solutions  have   been derived  except for   $\lambda = -1$ whose solutions can be expressed in terms of the parabolic cylinder functions  \cite{Coppel1961} or the confluent hypergeometric functions \cite{Yang1975Chien}. Theorem  \ref{Main Theorem} will explain why it is so difficult to find exact meromorphic solutions for the Falkner-Skan equation \eqref{eqFalkner-Skan-original} since 1931 and Remark \ref{remark_Chazy} will give an explicit infinite set of rational numbers for the coefficient $\lambda$ that one should consider in order to find new exact meromorphic solutions of \eqref{eqFalkner-Skan-original}.

 The similarity transformations are also applicable to   boundary layer problems in other physical contexts, and the resulting equations may still be ordinary differential equations. For example,  Cheng and Minkowycz \cite{Cheng1977Minkowycz} showed that the governing partial differential  equations of free convective boundary layer flow over a vertical flat plate  immersed in a porous medium can be reduced to
 \begin{eqnarray}
 \begin{cases} \label{Cheng-Minkowycz}
f''' (\eta) + \dfrac{a+1}{2} f(\eta) f''(\eta) - a {f'(\eta)}^2=0,\ a(a+1) \not=0,
\\
f(0)=0,\ f'(0)=1,\ f'(+\infty)=0,
\end{cases}
 \end{eqnarray}
 which is now called the Cheng-Minkowycz equation \cite{Na1996Pop}.

 The goal of this paper is to study meromorphic solutions of the third-order differential equation \eqref{third-order-equation} which covers both the Falkner-Skan equation \eqref{eqFalkner-Skan-original} and the Cheng-Minkowycz equation \eqref{Cheng-Minkowycz}.  It is known
\cite{Briot1875Bouquet} that if the first order autonomous ODE
\begin{equation} 
P(w,w') = 0,
\end{equation}
where $P$ is a polynomial, has a general solution   single-valued around all its   singularities which depend on the initial conditions (the so-called Painlev\'{e} property \cite{Conte1999}), then this solution must belong to the {\it class $W$}, which consists of  elliptic functions and their degeneracies  (rational functions of one exponential $\exp{ (kz)}, k \in \C$  and  rational functions of $z$). Further, if the second order ODE
\begin{equation}  
  w''= F(w',w,x),
\end{equation}
with $F$ rational in $w' $ and $ w$, analytic in $x$, possesses the   Painlev\'{e} property, and its general solution has no fixed singularities, then this solution is meromorphic (unless otherwise specified, meromorphic functions refer to functions   defined on the whole complex plane $\C$ without singularities other than poles)  \cite{Painleve1900}. Therefore, meromorphic functions  are   the natural building blocks to construct particular solutions of higher-order ODEs. It is for this reason that we will study meromorphic solutions of the equation \eqref{third-order-equation}, and unless otherwise specified, the independent variable $z$ is assumed to be complex.

Our goal is \textit{not} to determine which equations \eqref{third-order-equation}
possess the Painlev\'e property,
a question fully solved by Chazy  \cite{Chazy1911} and Cosgrove \cite{Cosgrove2000},
but only to determine all their solutions in a particular class,
independently of their Painlev\'e property.

In terms of the construction of meromorphic solutions, 
Eremenko \cite{Eremenko2006} has proved that  there exists a class of autonomous algebraic ODEs such that all their meromorphic solutions (if they exist)   belong to the class $W$, and then  all of them could be explicitly obtained using either the  sub-equation method \cite{Conte2010Ng}   or the Hermite decomposition \cite{Demina2011Kudryashov,Hermite-sum-zeta}.

This method has been successfully applied to many differential equations \cite{Ng2019Wu,Yuan2015XiongLinWu}, like the Swift-Hohenberg equation \cite{Conte2012NgWong}, whose homoclinic solutions have been intensively studied in \cite{Santra2009Wei}. However, it turns out that the arguments based on Nevanlinna theory used in \cite{Conte2012NgWong,Eremenko2006} cannot be applied to the equation \eqref{third-order-equation} because it has two terms with top degree (see Theorem \ref{Eremenko's theorem} for the definition), and hence we have to make use of generalized Wiman-Valiron theory \cite{Bergweiler2008RipponStallard} instead, which is the novelty of this paper.

The structure of this paper is as follows. In Section \ref{Main Results}, the main results are presented. For generic cases (see Definition \ref{generic cases}), all meromorphic solutions of \eqref{third-order-equation}
 are shown to belong to the class $W$ and then explicitly derived. This conclusion does not hold for non-generic cases and several examples are provided. It is also shown that if one wants to obtain other new meromorphic solutions of the Falkner-Skan equation \eqref{eqFalkner-Skan-original}, then one has to consider some specific $\lambda$ such that $\lambda = 1- 1/r, r \in \Z \backslash \{0\}$ or   $\lambda =2+6j/[(j-1)(j-6)], j \in \N\cup \{0\} \backslash \{1,2,3,6\}$ (see Remark \ref{Application to FS eq}). A few non-generic cases that pass the Painlev\'{e} test are discussed in Section \ref{Special Cases with   General Solutions}. In Section \ref{Generalized Wiman-Valiron Theory}, we recall the generalized Wiman-Valiron theory for meromorphic functions with direct tracts   \cite{Bergweiler2008RipponStallard}.  Section \ref{Proof of Theorem} is devoted to the proof of the main results, where   Eremenko's method is  generalized. The main tools applied in the proof are the generalized Wiman-Valiron theory  and Painlev\'{e} analysis. We summarize the main results of this paper in Section \ref{conclusion}. Finally, a brief summary on Painlev\'{e} test is provided in Appendix A.

\section{Main Results} \label{Main Results}
We first recall Eremenko's result  on the classification  of meromorphic solutions   to a class of autonomous algebraic ODEs. 

 	\begingroup
	\setcounter{tmp}{\value{theorem}}
	\renewcommand\thetheorem{\Alph{theorem}}
	\begin{theorem} {\cite{Eremenko2006}} \label{Eremenko's theorem}
 If an  autonomous  algebraic ODE
 \begin{equation} \label{higher order algebraic ordinary differential equation}
 \displaystyle \sum_{\lambda \in I}  w^{i_0} (w')^{i_1} \cdots (w^{(n)})^{i_n} = 0,
 \end{equation}
where $I$ consists of finite multi-indices of the form $\lambda = (i_0,i_1 \cdots, i_n)$, $i_k \in \N$,
  satisfies
  \begin{itemize}
\item [1)] there is only one top degree term (the degree  of  each term in \eqref{higher order algebraic ordinary differential equation} is defined as
$|\lambda|= i_0 + i_1+ \cdots + i_n$),

\item [2)] there is no nonnegative integer Fuchs indices (see Appendix A for  the definition),
\end{itemize}
then all its meromorphic solutions belong to the {\it class W}.
	\end{theorem}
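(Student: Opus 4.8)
The plan is to combine a local singularity (Painlev\'e) analysis at the movable poles with global growth estimates from Nevanlinna theory, and then to invoke the classical characterization of class $W$. First I would carry out the dominant-balance analysis at an arbitrary movable pole $z_0$ of a meromorphic solution $w$. Writing $w \sim c_0 (z-z_0)^{-p}$ and substituting into \eqref{higher order algebraic ordinary differential equation}, the hypothesis that there is a single top-degree term forces this term to balance the remaining lower-degree terms; this fixes the pole order $p$ as a positive integer (a necessary condition for meromorphy) together with the admissible leading coefficients $c_0$. Expanding $w = \sum_{k \ge 0} c_k (z-z_0)^{k-p}$ and inserting it into the equation yields a linear recursion for the $c_k$ whose solvability at step $k$ is governed by the Fuchs indices. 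Since by hypothesis 2) there are no nonnegative integer Fuchs indices (apart from the index $-1$, which merely reflects the arbitrariness of $z_0$), every coefficient $c_k$ with $k \ge 1$ is uniquely determined by $c_0$ and $z_0$. Consequently all poles of $w$ share the same order $p$, and the full local Laurent expansion at each pole is one fixed series up to the translation $z \mapsto z - z_0$; that is, the poles are all \emph{alike}.

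Next I would control the growth. Because there is exactly one top-degree monomial, the equation can be rewritten with this monomial isolated on one side and a differential polynomial of strictly lower degree on the other. This is precisely the configuration to which a Clunie-type lemma applies: it yields $m(r,w) = S(r,w)$, so that $T(r,w)$ is dominated by the integrated pole-counting term $N(r,w)$, and, crucially, that $w$ has finite order of growth. It is exactly here that the single-top-degree hypothesis is indispensable: when two top-degree terms are present neither can be isolated and this Nevanlinna-theoretic step breaks down, which is the reason the main body of the paper must replace it by the generalized Wiman--Valiron theory.

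Finally, I would pass from finite order and uniform local structure to membership in class $W$. The rigidity established in the first step shows that $N(r,w)$ is, up to a constant, the counting function of poles that are all translates of one fixed local model; combined with the finite order and the autonomy of the equation, this constrains the distribution of poles so strongly that $w$ must be doubly periodic, singly periodic, or possess only finitely many poles, giving respectively an elliptic function, a rational function of $\exp(kz)$, or a rational function of $z$ --- i.e. $w \in W$ --- by the classical representation theorem for finite-order functions with identical principal parts.

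The main obstacle I expect is the growth step and its aftermath: extracting genuine finite order of $w$ from the equation, and then converting ``finite order plus identical local behaviour at all poles'' into actual periodicity. The local analysis of the first paragraph is essentially algorithmic, but the transition to a global conclusion is where the real analytic input --- the Clunie estimate, the value-distribution bounds, and the final classification --- must be marshalled with care.
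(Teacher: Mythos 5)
Your three-ingredient strategy (local Painlev\'e analysis, Nevanlinna-theoretic growth control, then classification) is the right family of ideas: it is essentially Eremenko's method, which this paper cites rather than reproves, and which its own proof of Theorem 1 adapts (with Wiman--Valiron replacing the Nevanlinna step). However, two of your steps contain genuine gaps. First, a Clunie-type estimate does \emph{not} yield finite order. Isolating the single top-degree monomial gives $m(r,w)=S(r,w)$, hence $T(r,w)=N(r,w)+S(r,w)$; this bounds the proximity function by the pole-counting function, but nothing in the equation bounds $N(r,w)$ itself a priori, so no bound on the order of $w$ follows. The correct use of this estimate is different: if $w$ has \emph{finitely many} poles then $N(r,w)=O(\log r)$, whence $T(r,w)=O(\log r)+S(r,w)$ and $w$ is rational. (Two smaller points: when the top monomial involves derivatives rather than a pure power $w^d$, the standard Clunie hypothesis $w^nP[w]=Q[w]$ with $\deg Q\le n$ is not met and a generalized version is needed; and your ``one fixed series'' overstates the local conclusion --- condition 2) gives finitely many admissible Laurent series, one per admissible leading coefficient, which is what the later pigeonhole argument must use.)

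Second, and fatally as written, your final step invokes a theorem that does not exist. Finite order together with identical principal parts at all poles does \emph{not} imply periodicity or membership in class $W$: the function $\sum_{n\ge 1}1/(z-n^2)$ has finite order and identical principal parts at the non-periodic set $\{n^2\}$. The mechanism that actually forces periodicity is the rigidity of the ODE, not any representation theorem: since by condition 2) the Laurent expansion at a pole is uniquely determined by its leading coefficient, if $z_1,z_2$ are poles of $w$ with the same expansion, then $w(z)$ and $w(z+z_2-z_1)$ are two meromorphic solutions (autonomy) with identical Laurent series at $z_1$, hence identical by the identity theorem, so $z_2-z_1$ is a period of $w$. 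Combined with pigeonhole (infinitely many poles, finitely many expansion types) this gives the dichotomy: finitely many poles $\Rightarrow$ rational (by the corrected Clunie step), infinitely many poles $\Rightarrow$ periodic, hence elliptic or of the form $h(e^{kz})$. Even then one must still rule out an essential singularity of $h$ at $0$ and at $\infty$ --- e.g.\ by running the growth argument again on the transformed, non-autonomous equation for $h$, exactly as done in Case 2 of the paper's proof of Theorem 1 --- before concluding $h$ is rational. The translation/identity-theorem argument and this analysis of $h$ at the ends of the period strip are the core of the proof; both are absent from your outline.
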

	\endgroup

\begin{remark} \label{two terms with top degree}
Since equation \eqref{third-order-equation} has two terms with top degree, Theorem \ref{Eremenko's theorem} cannot be applied to it.
\end{remark}

When applied to equation \eqref{third-order-equation}, this theorem yields the following results.
\begin{enumerate}
\item
For $A B \not=0$, there exist two top degree terms, Theorem \ref{Eremenko's theorem} cannot be applied.

\item
For $A = B = 0$, the equation \eqref{third-order-equation} is linear with constant coefficients.

\item
For $A = 0, B \not = 0$, the equation \eqref{third-order-equation} obeys conditions i) and ii) of Theorem \ref{Eremenko's theorem},
hence all its meromorphic solutions belong to the {\it class W}.

\item

For $A \not = 0, B  = 0$, by Wiman-Valiron
theory \cite{Laine1993}, we conclude that any entire solution of  \eqref{third-order-equation} must be a polynomial.   On the other hand, we can find that equation \eqref{third-order-equation} does not have any polynomial solution with degree greater than 2, while it is easy to construct its polynomial solutions whose degrees do not exceed 2. Therefore, it remains to consider     meromorphic solutions $u$ of  \eqref{third-order-equation} with at least one pole $z_0$ in $\C$. By direct computation, we find that $u$ has the following Laurent series expansion around $z_0$
\begin{eqnarray} \label{Laurent series expansion-B=0}
u = \sum_{n=0}^{+\infty} u_n(z-z_0)^{n-1}, \quad u_0   =  -\dfrac{6}{A}.
\end{eqnarray}
The Fuchs indices  of \eqref{Laurent series expansion-B=0} are $-1,1,6$
(corresponding to the arbitrary coefficients $z_0,u_1,u_6$)
and the  conditions for \eqref{Laurent series expansion-B=0} to exist  are $\alpha = \gamma = 0$. In this case, equation \eqref{third-order-equation} becomes
\begin{eqnarray}
f''' -    A {f'}^2 + \beta f' + \delta=0,\
\label{third-order-equation-B=0}
\end{eqnarray}
whose general solution is
\cite{Magyari}
\begin{eqnarray}
  f(z) = \dfrac{\beta }{2 A} z  - \dfrac{6 }{A} \zeta(z-z_0;g_2,g_3) + c_1 , 
\end{eqnarray}
where $\zeta(z;g_2,g_3)$ is the Weierstrass zeta function  with
\begin{eqnarray} \label{first-integral-B=0}
\begin{cases}
  g_2 = -\dfrac{A \delta }{3} - \dfrac{\beta ^2}{12},
   \\
   g_3 = \dfrac{A^2 c}{108}-\dfrac{A \beta  \delta }{36}-\dfrac{\beta ^3}{216},
   \end{cases}
\end{eqnarray}
and $z_0,c,c_1 \in \C$ are arbitrary.

\end{enumerate}

In the rest of this paper, we will focus on the ``generic'' case of equation \eqref{third-order-equation} that is defined as follows.
\begin{definition} \label{generic cases}
The ``generic'' case of equation \eqref{third-order-equation} is defined to be
 any case for which $A, B \in \C$ obey  satisfy all of the following three conditions
\begin{align}
   \text{i) } & AB( A+B) \not = 0; \label{generic-1}
   \\
   \text{ii) } &  \dfrac{B}{A+B} \notin \Z \backslash \{0\}; \label{generic-2}
    \\
   \text{iii) } &    \dfrac{7 A+8 B \pm \sqrt{25 A^2+16 A B-32 B^2}}{2 (A+2 B)} \not \in \N\cup \{0\}.  \hspace{11 em} \label{generic-3}
\end{align}
\end{definition}
The main results are as follows.

\begingroup
	\setcounter{tmp}{\value{theorem}}
	\renewcommand\thetheorem{1}
\begin{theorem} \label{Main Theorem}
For any generic case, any meromorphic solution  of the equation \eqref{third-order-equation}
\begin{eqnarray*}
A u'(z)^2+B u(z) u''(z) = u'''(z) +\alpha  u''(z)+\beta  u'(z)+\gamma  u(z) + \delta, 
\end{eqnarray*}
where $AB \not = 0$, is   either
a rational function of the form
 \begin{eqnarray} \label{rational}
u(z) = \dfrac{u_0}{z-z_0} + P(z), 
\end{eqnarray}
 where $P(z)$ is a polynomial of degree at most two,
or
 a simply periodic function of the form
\begin{eqnarray} \label{rational in exp}
  u(z) &=& \dfrac{h_0}{e^{k z} - \zeta_0} + c_0, \quad  h_0, k, \zeta_0 \in \C^*, c_0 \in \C.
\end{eqnarray}
\end{theorem}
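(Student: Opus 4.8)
The plan is to follow Eremenko's strategy---first confine every meromorphic solution to the class $W$, then read off the admissible forms---while replacing the Nevanlinna step (which fails here, by Remark~\ref{two terms with top degree}, because \eqref{third-order-equation} carries the two top-degree terms $Buu''$ and $Au'^2$) with the generalized Wiman--Valiron theory for functions with direct tracts.

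First I would carry out the local Painlev\'e analysis at a movable pole $z_0$. A dominant-balance argument among $u'''$, $Buu''$ and $Au'^2$ forces every pole to be simple, with residue $u_0=-6/(A+2B)$; note $A+2B\neq0$ is implicit in the very formulation of condition~\eqref{generic-3}. Linearizing about this leading term yields the indicial (Fuchs) cubic, one of whose roots is the expected $-1$ (the pole location); dividing it out leaves a quadratic $j^2+pj+6=0$ whose two roots are exactly the quantities appearing in \eqref{generic-3} and which therefore satisfy $j_+j_-=6$. Conditions~\eqref{generic-1}--\eqref{generic-3} then guarantee that these resonances are neither $-1$ nor nonnegative integers, so the formal Laurent expansion at each pole is logarithm-free and \emph{rigid}: it is uniquely determined by $z_0$ together with the coefficients of \eqref{third-order-equation}, with no free parameter beyond the pole location.

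The hard part will be the global step: showing that every meromorphic solution lies in the class $W$. Since Theorem~\ref{Eremenko's theorem} does not apply, I would argue through tracts. If $u$ were transcendental entire, classical Wiman--Valiron theory supplies points $z$ near the maximal modulus where $u^{(k)}(z)/u(z)\sim(\nu(r)/z)^{k}$ with central index $\nu(r)\to\infty$; substituting into \eqref{third-order-equation}, the two top-degree terms combine into $-(A+B)(\nu(r)/z)^{2}u(z)^{2}$, which by \eqref{generic-1} (namely $A+B\neq0$) dominates every other term and cannot cancel---a contradiction. For a genuinely meromorphic $u$ not in $W$, I would locate a direct tract (a logarithmic singularity over $\infty$) and apply the generalized Wiman--Valiron theory recalled in Section~\ref{Generalized Wiman-Valiron Theory} to obtain the same asymptotic relations inside the tract; the identical dominant-balance contradiction, again hinging on $A+B\neq0$, rules the tract out. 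Hence $u$ has only pole-type singularities and finite order, so $u\in W$. This is where the novelty of the paper lies, and where I expect the real effort to be: making the tract analysis and the error terms in the asymptotic relations precise.

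Once $u\in W$, I would finish by elimination. The elliptic (doubly periodic) alternative is excluded at once: all poles are simple with the \emph{same} nonzero residue $u_0$, so the sum of the residues over a fundamental period parallelogram is a positive-integer multiple of $u_0$, hence nonzero, contradicting the vanishing of residue sums for elliptic functions. Thus $u$ degenerates to a rational function of $z$ or a rational function of $e^{kz}$. In the rational case, examining the behavior at $\infty$ shows that if the polynomial part had degree $m\geq3$ then balancing the top-degree terms would force $m=B/(A+B)$, a nonzero integer excluded by \eqref{generic-2}; hence the polynomial part has degree at most two. The rigidity of the resonances then lets me apply the sub-equation method (equivalently, a Hermite partial-fraction decomposition) to match the forced local expansions against the global partial fractions; this over-determined matching collapses to a single pole and fixes all remaining constants, yielding \eqref{rational}. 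Running the same argument in the variable $w=e^{kz}$, in which $u$ becomes rational with simple poles, produces a single pole in $w$ and gives \eqref{rational in exp}. Entire solutions appear here as the pole-free degenerations, i.e.\ polynomials of degree at most two.
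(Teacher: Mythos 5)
Your local analysis (simple poles of residue $-6/(A+2B)$, the Fuchs indices, rigidity of the Laurent expansion under conditions i)--iii)), your residue argument excluding elliptic solutions, and your degree bound via $m=B/(A+B)$ and condition ii) all agree with the paper. The genuine gap is the global step, i.e.\ precisely the reduction to the class $W$. You assert that a meromorphic solution not in $W$ must possess a direct tract (``a logarithmic singularity over $\infty$'') to which Theorem \ref{Wiman-Valiron-mero1} can be applied. There is no such theorem, and the assertion is false in general: a direct tract requires a pole-free unbounded component of $\{|u|>R\}$, and its existence is guaranteed only for transcendental meromorphic functions with \emph{finitely many} poles --- exactly the hypothesis under which the paper builds a tract in the contradiction argument of Lemma \ref{infinitely many poles}. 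A transcendental solution with infinitely many poles may have no direct tract whatsoever; indeed the actual solutions \eqref{rational in exp} (like $\tan z$) are bounded off their poles, so for large $R$ the set $\{|u|>R\}$ consists only of bounded neighborhoods of poles and no tract exists. Your fallback inference, ``only pole-type singularities and finite order, so $u\in W$,'' is also false as a general implication: $z+\tan z$ is meromorphic of finite order and lies outside $W$. So membership in $W$ is never established, and everything after ``Once $u\in W$'' rests on an unproven claim.

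What is missing is the bridge the paper actually uses: periodicity via uniqueness of the Laurent expansion. Since conditions ii)--iii) make the expansion at a pole unique (no free coefficient besides $z_0$), two distinct poles $z_1\neq z_2$ force $u(z-z_0+z_1)\equiv u(z-z_0+z_2)$, i.e.\ $u$ is periodic with period $z_2-z_1$. The tract machinery is then invoked only where tracts provably exist: first, in Lemma \ref{infinitely many poles}, assuming finitely many poles for contradiction, to conclude that every transcendental solution has infinitely many poles and is therefore periodic by the translation argument; and second, after your residue argument leaves only the simply periodic case and one writes $u=h(e^{kz})$ with $h$ having a single pole in $\C^*$, the tract argument is applied to $h$ and the transformed equation \eqref{transformed equation} to exclude essential singularities of $h$ at $0$ and $\infty$, making $h$ rational; the balances at $\zeta\to0$ and $\zeta\to\infty$ with $A+B\neq0$ then trim $h$ to $h_0/(\zeta-\zeta_0)+c_0$. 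Note also that the same translation argument is what forces a rational solution to have a single pole (two poles would make it periodic); your ``over-determined matching collapses to a single pole'' is an assertion, not a proof. Without the periodicity step your proof does not close, and with it your tract detour becomes unnecessary.
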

This theorem will be proven in Section \ref{Proof of Theorem}.

\begin{remark}
 We note that the solution \eqref{rational} is a polynomial when $u_0$ is zero.
\end{remark}

	\endgroup

 \begin{corollary} \label{explicit meromorphic solution}
 For  generic cases (see  Theorem \ref{Main Theorem}), all  nonconstant  meromorphic solutions of the ODE \eqref{third-order-equation} can be explicitly constructed and they are listed in Table \ref{table-meromorphic-solutions} (without loss of generality, $\alpha$ can be set to 0 by a translation and the arbitrary constant $z_0$ is absorbed in $z$ by representing $z-z_0$ as $z$).

 \end{corollary}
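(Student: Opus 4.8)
The plan is to deduce the Corollary from Theorem~\ref{Main Theorem}, which already restricts every nonconstant meromorphic solution of \eqref{third-order-equation} to one of the two explicit families \eqref{rational} and \eqref{rational in exp}. What remains is, for each family, to pin down the free parameters and to record the constraints on $(A,B,\alpha,\beta,\gamma,\delta)$ under which the family genuinely solves \eqref{third-order-equation}; these are precisely the entries of Table~\ref{table-meromorphic-solutions}. Before substituting I would normalise the equation: since $B\neq0$, the translation $u\mapsto u+\alpha/B$ eliminates the $\alpha u''$ term (absorbing its effect into $\delta$ through $\gamma$), and a shift in $z$ places the pole at the origin, so that without loss of generality $\alpha=0$ and $z_0=0$.

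For the rational family I would insert $u=u_0/z+p_2z^2+p_1z+p_0$ into \eqref{third-order-equation} and equate the coefficients of $z^{j}$. When $u_0\neq0$ the leading order $z^{-4}$ returns the residue $u_0=-6/(A+2B)$, and the lower orders down to $z^{2}$ form an essentially triangular system that fixes $p_0,p_1,p_2$ in turn while imposing compatibility relations among $\beta,\gamma,\delta$; the polynomial degeneration $u_0=0$ is matched directly and yields a polynomial of degree at most two. Solving this finite system produces the rational rows of the table, together with the degenerate sub-cases (such as $A=B$ or $\gamma=0$) in which some of these relations collapse.

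For the simply periodic family I would set $t=e^{kz}$, so that $u=h_0/(t-\zeta_0)+c_0$ and each $z$-derivative becomes a $t$-derivative multiplied by a power of $kt$. Multiplying \eqref{third-order-equation} through by $(t-\zeta_0)^4$ turns it into a polynomial identity in $t$; equating all its coefficients to zero, together with the residue relation $h_0/(k\zeta_0)=u_0=-6/(A+2B)$ read off at the pole $t=\zeta_0$, determines $k,h_0,\zeta_0,c_0$ and the admissible relations among the coefficients, giving the simply periodic rows of the table. The limits $t\to0$ and $t\to\infty$ additionally fix the additive constant $c_0$ and the two asymptotic levels of the solution.

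The individual computations are routine; the genuine difficulty, and the step I expect to be the main obstacle, is the bookkeeping. One must track every value of the parameters at which the linear systems above degenerate, so as to certify that Table~\ref{table-meromorphic-solutions} is simultaneously exhaustive and free of redundancy, and one must verify in each branch that the candidate function satisfies all the coefficient equations rather than merely the leading-order ones.
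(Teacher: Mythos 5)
Your proposal is correct and follows essentially the same route as the paper: the Corollary is obtained from Theorem \ref{Main Theorem} by substituting the two canonical forms \eqref{rational} and \eqref{rational in exp} (after normalizing $\alpha=0$ by the translation $u \mapsto u + \alpha/B$, exactly as the paper's parenthetical remark indicates) into \eqref{third-order-equation} and solving the resulting finite algebraic system for the parameters, which yields precisely the rows and constraint columns of Table \ref{table-meromorphic-solutions}. Your consistency check via the residue relation $h_0/(k\zeta_0) = -6/(A+2B)$ and your separate treatment of the degenerate polynomial case $u_0=0$ are exactly the bookkeeping the paper leaves implicit.
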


\begin{table}[h]

\newcommand{\tabincell}[2]{
\begin{tabular}{@{}#1@{}}#2\end{tabular}}
\centering 
\caption{All meromorphic solutions of the ODE  \eqref{third-order-equation} in the generic case, in which $\zeta_0 \in \C^*, b \in \C$ are arbitrary.}
\resizebox{\textwidth}{!}{
\begin{tabular}{   l  | l      } 
 \hline
 \hline 
\tabincell{c}{ Nonconstant meromorphic solutions \\ of the ODE  \eqref{third-order-equation}} & Constraints on the parameters
\\
\hline
$u(z) = \dfrac{h_0}{e^{k z} - \zeta_0} + c_0 $
& 
$
\begin{cases}
 \gamma=  \delta = 0
 \\
 c_0 = -\dfrac{\beta +k^2}{B k}
 \\ h_0 = -\dfrac{2 \zeta_0  \left(\beta +k^2\right)}{B k}
 \\   \beta  (A+2 B)+k^2 (A-B)=0
 \end{cases}
$
\\
\hline
  $u(z) =  -\dfrac{\gamma  z^2}{4 (A-B)}+\dfrac{\beta  z}{2 (A-B)}-\dfrac{6}{z (A+2 B)}$ & 
$
\begin{cases}
 \gamma=0
 \\ \beta ^2 (A-2 B)+4 \delta  (A-B)^2 = 0
 \end{cases}
$
or
$
\begin{cases}
 \gamma \not =0
 \\
  4A - B = 0
  \\ \beta = \delta = 0
 \end{cases}
$

\\
\hline  $u(z) = a z^2 + b z +c$ & 
$
\begin{cases}
 2 A + B \not = 0
 \\
  a  = \dfrac{\gamma }{2 (2 A+B)} \not=0
  \\ \beta = 0
  \\ c =  \dfrac{(2 A+B) \left(A b^2-\delta \right)}{2 A \gamma }
 \end{cases}
$
or
$
\begin{cases}
 2 A + B   = 0
 \\
  \beta   = \gamma  = 0
  \\ A \left(b^2-4 a c\right)-\delta =0
 \end{cases}
$
 \\
\hline
$u(z) = a z +b  $ &
$
\begin{cases}
 \gamma = 0
 \\
   A a^2-\beta  a-\delta = 0
 \end{cases}
$
 \\
 \hline
\end{tabular}
}
\label{table-meromorphic-solutions} 
\end{table}

\begin{remark} \label{remark_Chazy}
Chazy \cite{Chazy1911} introduced 13 classes of third order ODEs in his classification of certain third-order differential equations with the Painlev\'{e} property. For comprehensive discussions on these equations, we also refer the readers to the reference \cite{Cosgrove2000} in which the solutions of Chazy equations IX and X are constructed in terms of hyperelliptic functions of genus 2. Nevertheless, equation \eqref{third-order-equation} is not covered by Chazy's classes except two particular cases: `$A=B, \beta = \gamma = 0$' and `$A=3, B=2, \alpha = \beta = \gamma = \delta = 0$', which correspond to Chazy equations II and III respectively and will be treated in Section \ref{Special Cases with   General Solutions}. This also implies that equation \eqref{third-order-equation} does not possess the Painlev\'{e} property except for specific choices of the coefficients.
\end{remark}

\begin{remark} \label{Application to FS eq} According to  Theorem \ref{Main Theorem} and Corollary \ref{explicit meromorphic solution}, if one wants to construct other nongeneric meromorphic solutions of the Falkner-Skan equation  \eqref{eqFalkner-Skan-original}, then one has to consider some specific $\lambda$ such that $\lambda = 1- 1/r, r \in \Z \backslash \{0\}$ or at least one of $(8-7 \lambda \pm \sqrt{\lambda  (25 \lambda -16)-32})/(4-2 \lambda )$ is a nonnegative integer except $1, 2, 3  $ and $ 6$, i.e., $\lambda =2+6j/[(j-1)(j-6)], j \in \N\cup \{0\} \backslash \{1, 2, 3 ,  6\}$, such as  $ \lambda =   -2 \; (j=4), $ $ -11/2 \; (j=5), $ $ 9 \; (j=7)$ or $38/7 \; (j=8)$, because we will show in Section \ref{Special Cases with   General Solutions} that \eqref{eqFalkner-Skan-original} admits general solution for $\lambda = -1$, which corresponds to $j=2,3$. In particular, one has to focus on  $\lambda = 1- 1/r, r \in \Z \backslash \{0\}$ for physically meaningful meromorphic solutions as $\lambda \pi$ represents the angle of the wedge which requires  $-1 < \lambda <2$.  Otherwise, one has to concentrate on the solutions of \eqref{eqFalkner-Skan-original} with more complicated singularities other than poles.

\end{remark}

\begin{remark}
For non-generic cases, the conclusion of   Theorem \ref{Main Theorem}  does not hold and this is shown through the examples below. 
However, we do not know  what happens in general if one
of the conditions \eqref{generic-1}--\eqref{generic-3} is dropped.

\end{remark}

\begin{example}
Let  $\alpha=\beta=\gamma=\delta=0, A=- B = 1 $ (condition i) fails), then the  equation 
 \begin{eqnarray*}  
u'''     =  (u')^2 - u u'',
 \end{eqnarray*}
has an entire solution 
  \begin{eqnarray*}  
u(z) = e^{z-z_0} - 1, \quad z_0 \in \C.
 \end{eqnarray*}

\end{example}

\begin{example}
Let  $\alpha=\beta=\gamma=\delta=0, A=3, B = n A/(1-n)  $ , where $ n = -2$  (condition ii) fails), then the  equation \eqref{third-order-equation}   reduces to the well-known    Chazy equation III
 \begin{eqnarray}  \label{Chazy Equation}
u'''     = - 3 (u')^2 + 2 u u'',
 \end{eqnarray}
and its only globally meromorphic solution is the rational solution \cite[p.~335]{Chazy1911}
  \begin{eqnarray*}  
u(z) = c/(z - z_0)^2 -6 / (z - z_0), \quad c,z_0 \in \C,
 \end{eqnarray*}
 with a double pole at $z_0$.

\end{example}

\begin{example}
Let  $\alpha=\beta=\gamma=0, A=  B = -2  $, then the general solution of
\begin{eqnarray*}  
u'''(z) =  -2 u'(z)^2  -2 u(z) u''(z) - \delta,
 \end{eqnarray*}
 which has Fuchs indices $j = -1, 2 , 3$ (condition iii) fails), is 
given by
$u = w' / w$, where $w$ satisfies
\begin{eqnarray*}
w'' -  (\delta z^2/2 + c_1 z + c _2)w = 0, \quad c_1, c_2 \in \C,
 \end{eqnarray*}
and hence $u$ can be expressed in terms of
  the parabolic cylinder functions ($\delta \not = 0$), the Airy functions ($\delta = 0, c_1 \not=0$), exponentials ($\delta =  c_1 =0, c_2 \not = 0$) 
or polynomials ($\delta =  c_1 = c_2  = 0$).

\end{example}

\section{Special Cases with   General Solutions} \label{Special Cases with   General Solutions}

Let us first require  equation  \eqref{third-order-equation} to pass the Painlev\'e test (see Appendix A) because it provides necessary conditions for the Painlev\'e property \cite{Conte1999}.
It is noted that the indicial equation (see Appendix A) of \eqref{third-order-equation} is
\begin{equation}
(i +1) \left(i^2 + \dfrac{  -7 A-8 B}{A+2 B} i +6\right) = 0
\end{equation}
and the diophantine equation $i_1 i_2=6$ has four solutions
\begin{equation*}
(i_1,i_2)= (-1,-6), \; (1,6), \;  (-2,-3), \;  (2,3)
\end{equation*}
for the respective relations $B/A = - 7/11, 0, -2/3, 1$. 
The case $(i_1,i_2)=(-1,-6)$ displays movable multivaluedness (double Fuchs index) while the case $(i_1,i_2)=(1,6)$, which corresponds to $B=0$, has been solved in Section 2. Hence, only two cases remain to be dealt with.

For $(i_1,i_2)=(2,3)$, i.e., $B=A$, further conditions for equation  \eqref{third-order-equation} to  pass the Painlev\'e test  are $\beta = \gamma = 0$, then equation  \eqref{third-order-equation}  reduces to a special case of Chazy equation II and  admits  the first integral
\begin{equation*}
 u' - \frac{A}{2} u^2 + \frac{\delta}{2} z^2 + k_1 z + k_0 = 0,
\end{equation*}
where $k_1,k_0$ are the integration constants. Therefore,  equation  \eqref{third-order-equation} has a singlevalued general solution given by
\begin{equation}
u=
\begin{cases}
-\dfrac{2}{A} \dfrac{w'_1(z-z_0)}{w_1(z-z_0)},\ \delta \not = 0,
\\
-\dfrac{2}{A} \dfrac{w'_2(z-z_0)}{w_2(z-z_0)},\ \delta = 0,  \ k_1  \not=0,
\\
\dfrac{2  k \tan [k (z-z_0)]}{A},\ k^2 = \dfrac{Ak_0}{2}, \delta = k_1=0,  \ k_0 \not=0,
\\
-\dfrac{2}{A} \dfrac{1}{z-z_0},\ \delta =  k_1=k_0=0,
\end{cases}
\end{equation}
where $z_0$ is arbitrary and $w_i,i=1,2$,  is the   general solution of
\begin{equation}
w'' +  \dfrac{1}{2} A  \left(-\dfrac{\delta  z^2}{2} + k_1 z+k_0\right)w = 0,
\end{equation}
which can be expressed in terms of the parabolic cylinder functions ($\delta \not = 0$) or the Airy functions ($\delta = 0, k_1 \not=0$). Hence,   the solution given in \cite{Coppel1960}  of the Falkner-Skan equation  \eqref{eqFalkner-Skan-original} with $\lambda = -1$    is rediscovered.

For $(i_1,i_2)=(-2,-3)$, i.e., $B=-2A/3$,  by using the perturbative Painlev\'{e} method, it has been proved in \cite{Conte1993FordyPickering}  that a necessary condition for equation  \eqref{third-order-equation} to  possess the Painlev\'e property is $\beta = \gamma =\delta = 0$. In this case, by a scaling in the independent variable $z$,  equation  \eqref{third-order-equation} reduces to the famous Chazy equation III  \eqref{Chazy Equation} whose general solution is singlevalued and can be expressed in terms of two solutions to the hypergeometric equation. In addition, the general solution of Chazy equation III  \eqref{Chazy Equation} has a movable natural boundary that is a circle    with center and radius depending on the initial conditions.

\begin{remark}
We have rediscovered  a particular solution \cite{Belhachmi2001BrighiTaous}
 \begin{equation*}
 f(z) = \sqrt{6} \tanh \left( \dfrac{t}{\sqrt{6}} \right)
 \end{equation*}
of the Cheng-Minkowycz equation \eqref{Cheng-Minkowycz} for $a = -1/3$, i.e., $A = B$. This case ($a = -1/3$) has certain special interest as it is related to a horizontal line source embedded in a porous medium. 

\end{remark}

\begin{remark}
For generic values of the three constants of integration,
none of the general  solutions of \eqref{third-order-equation} is elliptic or degenerate   for the  cases $B = 0, B = -2 A /3$ and $ B = A$.
\end{remark}

\section{Generalized Wiman-Valiron Theory} \label{Generalized Wiman-Valiron Theory}

Wiman-Valiron theory \cite{Laine1993}  describes the asymptotic behaviours of  transcendental entire functions in certain discs
around points of maximum modulus and has found many applications in complex differential equations \cite{Laine1993}.  It has been generalized to certain class of meromorphic functions by Bergweiler, Rippon and Stallard \cite{Bergweiler2008RipponStallard}. In this section, we shall recall  some of  the main results in \cite{Bergweiler2008RipponStallard} as they play an essential role in the proof of   Theorem \ref{Main Theorem}.

\begin{definition} [\cite{Bergweiler2008RipponStallard}]
Suppose $\Omega$ is an unbounded domain in the complex plane $\C$ such that its boundary consists of piecewise
smooth curves and $\C \backslash \Omega$ is unbounded. Let $y$ be a complex-valued function whose domain of definition contains the closure $\overline{\Omega}$ of $\Omega$. Then $\Omega$ is called a {\it direct tract}
of $y$ if the following two conditions hold
\begin{itemize}
  \item $y$ is analytic in  $\Omega$ and continuous in $\overline{\Omega}$;
  \item there exists $R > 0$ such that $|y(z)| = R$
for $z \in   \partial \Omega $ while $|f(z)| > R$ for $z \in \Omega$.
\end{itemize}

  It  will be shown in Section  \ref{Proof of Theorem} that every transcendental meromorphic function with finitely many poles on the complex plane has a direct tract.

\end{definition}

Let $K>0$ and $y$ be a transcendental  meromorphic function on $\C$ which has a direct tract $\Omega$.  Let
\begin{eqnarray*}
M(r)=M(r,  \Omega,y)= \max \{|y(z)|:|z|=r, z \in \Omega \},
\end{eqnarray*}
then the derivative
\begin{eqnarray*}
a(r) = d \log M(r) / d \log r =   \dfrac{r M'(r)}{M(r)}  
\end{eqnarray*}
exists for all $r > 0$ outside a countable set $\Lambda \subseteq (0, \infty)$.
By a theorem due to Fuchs \cite{Fuchs1981}, if $\infty$ is not a  pole of $y$, then we have
\begin{eqnarray} \label{Fuchs's result}
\dfrac{\log M(r)}{\log r} \rightarrow \infty \text{ and } a(r) \rightarrow \infty, \text{ as } r \rightarrow \infty, r \not \in \Lambda.
\end{eqnarray}
Assume $r_0 = \inf \{|z| : z \in \Omega \}$, and for any $r > r_0$, $z_r$ is chosen such that $|z_r| = r$ and $|y(z_r)| = M(r)$.

\begingroup
	\setcounter{tmp}{\value{theorem}}
	\setcounter{theorem}{1} 
	\renewcommand\thetheorem{\Alph{theorem}}
\begin{theorem}[\cite{Bergweiler2008RipponStallard}] \label{Wiman-Valiron-mero1}
Let $y$  be a transcendental  meromorphic function on $\C$ with a direct tract $\Omega$. Then for every $\sigma > 1/2$, there exists a set $F \subset [1, \infty) $ of finite logarithmic measure such that for $r \in [r_0, \infty)\backslash F$, the disk
\begin{eqnarray*}
D_r =  \{z: |z - z_r| < r a^{-\sigma}(r)  \}
\end{eqnarray*}
is contained in $\Omega$. Moreover, we have
\begin{eqnarray*}
y^{(k)}(z) = \left( \dfrac{a(r)}{z} \right)^k \left( \dfrac{z}{z_r} \right)^{a(r)} y(z) (1 + o(1)), \quad  z \in D_r,
\end{eqnarray*}
as $r \rightarrow \infty, r \not \in F$.
	\endgroup

\begin{lemma}[\cite{Bergweiler2008RipponStallard}]  \label{Wiman-Valiron-mero2}
For any $\beta > 0$, we have
\begin{eqnarray*}
(a(r))^{\beta} = o(M(r)),
\end{eqnarray*}
as $r \rightarrow \infty $ outside a set of finite logarithmic measure.

\end{lemma}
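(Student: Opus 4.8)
The plan is to pass to the logarithmic variable $s=\log r$, in which both the quantities involved and the notion of exceptional set simplify. Writing $\psi(s)=\log M(e^{s})$, the definition $a(r)=d\log M(r)/d\log r$ becomes exactly $a(r)=\psi'(s)$, and a set $F\subset[1,\infty)$ has finite logarithmic measure $\int_F dr/r$ if and only if its image $\log F$ has finite Lebesgue measure. Since $y$ is transcendental, $\infty$ is not a pole of $y$, so \eqref{Fuchs's result} applies and yields $\psi(s)\to\infty$ and $\psi'(s)=a(r)\to\infty$ as $s\to\infty$ off a countable (hence null) set. The claim $(a(r))^{\beta}=o(M(r))$ is equivalent to $\beta\log\psi'(s)-\psi(s)\to-\infty$, so it suffices to produce a set of finite Lebesgue measure in $s$ outside of which this holds.

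For this I would invoke the standard Borel-type growth estimate. From the direct-tract framework of \cite{Bergweiler2008RipponStallard}, $\log M(r)$ is a convex function of $\log r$; hence $\psi$ is increasing and locally absolutely continuous, so the fundamental theorem of calculus together with the substitution $u=\psi(s)$ is legitimate. Fix any $\delta>0$ and set $E_{\delta}=\{s\ge s_0:\psi'(s)>\psi(s)^{1+\delta}\}$, where $s_0$ is chosen so that $\psi(s_0)\ge 1$. Then
\begin{eqnarray*}
\abs{E_{\delta}} \le \int_{E_{\delta}} \frac{\psi'(s)}{\psi(s)^{1+\delta}}\, ds \le \int_{s_0}^{\infty} \frac{\psi'(s)}{\psi(s)^{1+\delta}}\, ds = \int_{\psi(s_0)}^{\infty} \frac{du}{u^{1+\delta}} = \frac{\psi(s_0)^{-\delta}}{\delta} < \infty,
\end{eqnarray*}
so $E_{\delta}$ has finite Lebesgue measure.

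It then remains to read off the conclusion on the complement. For $s\notin E_{\delta}$ one has $\log\psi'(s)\le(1+\delta)\log\psi(s)$, whence
\begin{eqnarray*}
\beta\log\psi'(s)-\psi(s) \le \beta(1+\delta)\log\psi(s)-\psi(s) \longrightarrow -\infty
\end{eqnarray*}
as $s\to\infty$, since $\psi(s)\to\infty$ and $\log\psi(s)=o(\psi(s))$. Transporting back through $s=\log r$, the set $E_{\delta}$, together with the null countable set where $a(r)$ fails to exist, has finite logarithmic measure in $r$, and outside it $(a(r))^{\beta}/M(r)\to 0$, which is the assertion.

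The analytic core here, namely the Borel growth lemma and the elementary comparison $\log\psi=o(\psi)$, is routine, and in particular no single value of $\delta$ needs to be optimised. The one point that genuinely relies on the theory being recalled, and which I expect to be the main obstacle, is justifying that $\log M(r,\Omega,y)$ is convex (or at least increasing and absolutely continuous) in $\log r$, so that the change of variables above is valid, together with the two divergences $\psi\to\infty$ and $\psi'\to\infty$ off a negligible set. Both are supplied by the direct-tract machinery of \cite{Bergweiler2008RipponStallard} and by Fuchs's theorem \cite{Fuchs1981} recorded in \eqref{Fuchs's result}, so they may be assumed here.
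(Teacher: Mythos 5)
Your argument is correct, but there is nothing in the paper to compare it against: the paper states Lemma \ref{Wiman-Valiron-mero2} as a quoted result of \cite{Bergweiler2008RipponStallard} and gives no proof, so what you have written is a reconstruction of the argument that the paper delegates to that reference. Your reconstruction is essentially the standard Wiman--Valiron/Borel-lemma proof, and the two facts you flag as imported are indeed supplied by the direct-tract framework: one extends $v=\log\left|y/R\right|$ by $0$ off $\Omega$ to obtain a continuous, non-negative subharmonic function on $\C$, whence $\psi(s)=\log M(e^{s})$ is (up to the constant $\log R$) non-decreasing by the maximum principle and convex in $s$, hence locally absolutely continuous with derivative defined off a countable set --- this convexity is also exactly what justifies the paper's own assertion that $a(r)$ exists outside a countable set $\Lambda$. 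Granting that, your Borel estimate $\abs{E_{\delta}}\le\psi(s_0)^{-\delta}/\delta$ is sound, the change of variables is legitimate, and the final limit $\beta\log\psi'(s)-\psi(s)\le\beta(1+\delta)\log\psi(s)-\psi(s)\to-\infty$ off $E_{\delta}$ uses \eqref{Fuchs's result}, which applies because a transcendental meromorphic function cannot have a pole at $\infty$. Two minor points worth making explicit if this were to be written up: the countable set where $\psi'$ fails to exist should simply be absorbed into the exceptional set (it has zero logarithmic measure), and positivity of $\psi'$ eventually (needed for $\log\psi'$ to make sense and for the monotone substitution) comes from $a(r)\to\infty$. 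A small bonus of your proof over the bare statement: a single exceptional set $E_{\delta}$, for any fixed $\delta>0$, works simultaneously for every $\beta>0$.
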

\section{Proof of Theorem \ref{Main Theorem}} \label{Proof of Theorem}

\begin{lemma} \label{infinitely many poles}
If $A+B \not = 0$, then any transcendental meromorphic solution of equation \eqref{third-order-equation}
has infinitely many poles.

\end{lemma}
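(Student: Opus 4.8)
The plan is to argue by contradiction: suppose $u$ is a transcendental meromorphic solution of \eqref{third-order-equation} with only finitely many poles, and derive $A+B=0$. Since $u$ is transcendental and has only finitely many poles in $\C$, it cannot extend meromorphically to $\infty$, so $\infty$ is an essential singularity and in particular \emph{not} a pole; this is what lets me invoke Fuchs's result \eqref{Fuchs's result}. I would first record that such a $u$ possesses a direct tract $\Omega$ (the fact promised to be established in this section), so that the generalized Wiman-Valiron machinery of Theorem \ref{Wiman-Valiron-mero1} and Lemma \ref{Wiman-Valiron-mero2} applies, together with $a(r)\to\infty$ and $\log M(r)/\log r\to\infty$ from \eqref{Fuchs's result}.

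The heart of the argument is to evaluate the asymptotics of Theorem \ref{Wiman-Valiron-mero1} at the center $z=z_r$ of the disk $D_r$, where the factor $(z/z_r)^{a(r)}$ collapses to $1$, giving $u^{(k)}(z_r)=\bigl(a(r)/z_r\bigr)^k u(z_r)\,(1+o(1))$ for $r\to\infty$, $r\notin F$. Substituting these into the two top-degree terms of \eqref{third-order-equation} and using $|z_r|=r$, I get
\begin{equation*}
A\,u'(z_r)^2+B\,u(z_r)u''(z_r)=(A+B)\Bigl(\tfrac{a(r)}{r}\Bigr)^{2}u(z_r)^2\,(1+o(1)).
\end{equation*}
The point of the hypothesis $A+B\neq 0$ is precisely that the two degree-two contributions do not cancel at leading order, so this combined term is genuinely of size $|A+B|\,(a(r)/r)^2 M(r)^2$ with $M(r)=|u(z_r)|\to\infty$.

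It then remains to show that every term on the other side of the rearranged equation $u'''+\alpha u''+\beta u'+\gamma u+\delta$ is of strictly smaller order at $z_r$. Comparing against the dominant term, the relevant ratios are $M(r)r/a(r)$ for the $u'''$ term, $M(r)$ for the $\alpha u''$ term, $a(r)M(r)/r$ for the $\beta u'$ term, and $a(r)^2 M(r)/r^2$ for the $\gamma u$ term; the constant $\delta$ is trivially negligible. Each of these tends to $\infty$: Lemma \ref{Wiman-Valiron-mero2} (with $\beta=1$) gives $a(r)=o(M(r))$, while $\log M(r)/\log r\to\infty$ from \eqref{Fuchs's result} forces $M(r)/r^{n}\to\infty$ for every $n$, and $a(r)\to\infty$ handles the remaining factors. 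Hence the right-hand side is $o\bigl((a(r)/r)^2 M(r)^2\bigr)$, and dividing the equation through by $(a(r)/r)^2 u(z_r)^2$ along a sequence $r_n\to\infty$ avoiding the finite-logarithmic-measure set $F$ yields $(A+B)(1+o(1))=o(1)$, i.e. $A+B=0$, the desired contradiction.

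The main obstacle I anticipate is bookkeeping the error terms cleanly when forming the \emph{products} $u\,u''$ and $(u')^2$: one must verify that multiplying two asymptotic relations each carrying a $(1+o(1))$ factor still yields $(a(r)/z_r)^2 u^2(1+o(1))$ uniformly at $z_r$, and that evaluating at the single point $z_r$ (rather than throughout $D_r$) is legitimate for the ODE identity. A secondary point requiring care is the clean reduction of ``finitely many poles, transcendental'' to ``$\infty$ is not a pole,'' which is what activates \eqref{Fuchs's result}; this, together with the existence of the direct tract, is the only input I would need to take from elsewhere in the section.
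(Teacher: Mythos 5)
Your proposal is correct and follows essentially the same route as the paper: contradiction, construction of a direct tract from the finitely-many-poles assumption, evaluation of the generalized Wiman--Valiron asymptotics at $z_r$, and use of Lemma \ref{Wiman-Valiron-mero2} together with \eqref{Fuchs's result} to show the combined top-degree term $(A+B)(a(r)/r)^2M(r)^2$ dominates everything else, forcing $A+B=0$. The only difference is presentational: you compare each lower-order term against the dominant one individually, whereas the paper bounds the whole right-hand side crudely by $K'a^3(r)M(r)$ and then applies Lemma \ref{Wiman-Valiron-mero2} once with $a^3(r)=o(M^{\varepsilon}(r))$.
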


\begin{proof}
We prove by contradiction. Suppose $u$ has finitely many poles on $\C$, then there exists $L>0$ such that $u$ is analytic in $D = \{ z:  |z| \geq L \}$.
   Choose $R > \max_{|z| = L}|u(z)|$. Let $\Omega$ be a component of the set $\{z: |u(z)| > R, |z| > L\}$, then $\Omega$ is a direct tract of $u$.
Apply Theorem \ref{Wiman-Valiron-mero1} to the equation \eqref{third-order-equation} with $z = z_r$,
then, as $r \rightarrow \infty, r \not \in F$, we obtain
\begin{align}
&A \left( \dfrac{a(r)}{z_r} \right)^2   u(z_r)^2 (1 + o(1))+B u(z_r) \left( \dfrac{a(r)}{z_r} \right)^2   u(z_r) (1 + o(1))  \nonumber
\\
=& \left( \dfrac{a(r)}{z_r} \right)^3   u(z_r) (1 + o(1)) +\alpha  \left( \dfrac{a(r)}{z_r} \right)^2   u(z_r) (1 + o(1))+\beta  \left( \dfrac{a(r)}{z_r} \right)   u(z_r) (1 + o(1)) \label{step10}
\\
&+\gamma  u(z_r) + \delta. \nonumber
\end{align}
Take the modulus on both sides of equation \eqref{step10},  we then have
\begin{eqnarray*}
&&\left|(A+B) \left( \dfrac{a(r)}{z_r} \right)^2    \right| M^2(r)  (1 + o(1))  \\
&=& \left| \left( \dfrac{a(r)}{z_r} \right)^3   u(z_r) (1 + o(1)) +\alpha  \left( \dfrac{a(r)}{z_r} \right)^2   u(z_r) (1 + o(1)) \right.
\\
&&\left. +\beta  \left( \dfrac{a(r)}{z_r} \right)   u(z_r) (1 + o(1)) \label{step1-0}
+\gamma  u(z_r) + \delta \right| 
\\
&\leq& a^3(r)\left| u(z_r) \right| + |\alpha| a^2(r)\left| u(z_r) \right| +|\beta|a(r)\left| u(z_r) \right| \\
&&+|\gamma|\left| u(z_r) \right| + \left|\delta \right|
\\
&\leq&  K' a^3(r)     M(r)
\end{eqnarray*}
for some $K'>0$ and sufficiently large $r \not \in F$. After dividing by $M(r)$, we get
\begin{eqnarray*}
\left|(A+B) \left( \dfrac{a(r)}{z_r} \right)^2    \right| M(r)  (1 + o(1))
 \leq K' a^3(r).
\end{eqnarray*}
According to Lemma \ref{Wiman-Valiron-mero2}, for any $\varepsilon \in (0, 1)$, when $r \not \in F$ is sufficiently large, the inequality
\begin{eqnarray*}
\left|A+B  \right| \left( \dfrac{a(r)}{r} \right)^2   M(r) (1 + o(1))   \leq M^{\varepsilon}(r)
\end{eqnarray*}
holds, and further by \eqref{Fuchs's result},  we have $M(r) > r^{\frac{2} {1-\varepsilon}}$ for all large $r$ and it follows from $a(r) \rightarrow \infty$ as $r \not \in \Lambda$ that
\begin{eqnarray*}
A+B = 0.
\end{eqnarray*}
Thus, we get a contradictation.
\end{proof}

\begin{corollary} \label{entire solution}
If $A+B \not = 0$, then the ODE \eqref{third-order-equation}
has no transcendental entire solutions.
\end{corollary}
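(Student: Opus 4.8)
The plan is to derive this statement as an immediate consequence of Lemma \ref{infinitely many poles}. The key observation is simply that a transcendental entire function is, in particular, a transcendental meromorphic function on $\C$ whose set of poles happens to be empty. So I would argue by contradiction: suppose, for a fixed choice of coefficients with $A+B\not=0$, that $u$ is a transcendental entire solution of \eqref{third-order-equation}. Being entire, $u$ has no poles anywhere in $\C$. On the other hand, $u$ is a transcendental meromorphic solution of \eqref{third-order-equation} and the hypothesis $A+B\not=0$ is precisely the hypothesis of Lemma \ref{infinitely many poles}, so that lemma applies verbatim and forces $u$ to possess infinitely many poles in $\C$. These two conclusions are incompatible, which gives the desired contradiction and shows that no such transcendental entire solution can exist.

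Since the argument reduces entirely to this inclusion (entire $\subset$ meromorphic) together with a direct citation of the lemma, there is essentially no substantive analytic or computational obstacle to surmount: all the real work—the construction of the direct tract and the application of the generalized Wiman-Valiron estimates of Theorem \ref{Wiman-Valiron-mero1} and Lemma \ref{Wiman-Valiron-mero2}—has already been carried out in proving Lemma \ref{infinitely many poles}. The only point that merits being stated explicitly is that the two-term top-degree structure of \eqref{third-order-equation}, which produces the coefficient $A+B$ upon applying the Wiman-Valiron asymptotics at the maximum-modulus point, is exactly what makes the nonvanishing condition $A+B\not=0$ the natural and sufficient hypothesis here, so no further case distinctions on $\alpha,\beta,\gamma,\delta$ are required.

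\begin{proof}
If $u$ were a transcendental entire solution of \eqref{third-order-equation}, then $u$ would be a transcendental meromorphic solution with no poles. Since $A+B\not=0$, Lemma \ref{infinitely many poles} implies that $u$ has infinitely many poles in $\C$, a contradiction. Hence \eqref{third-order-equation} has no transcendental entire solution when $A+B\not=0$.
\end{proof}
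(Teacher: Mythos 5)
Your proposal is correct and is exactly how the paper intends Corollary \ref{entire solution} to be read: the paper gives no separate proof because the corollary is an immediate consequence of Lemma \ref{infinitely many poles}, a transcendental entire function being a transcendental meromorphic function with no poles. Your explicit contradiction argument fills in precisely that one-line inference, with no divergence from the paper's approach.
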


\noindent {\it Proof of   Theorem \ref{Main Theorem}}.
Let $u$ be a meromorphic solution of \eqref{third-order-equation}.
If $u$ has no poles in $\C$, then according to Corollary \ref{entire solution}, $u$ must be a polynomial.
Because of condition ii), by substituting $u$ into the equation \eqref{third-order-equation} and
 comparing the coefficients of the top degree terms, we conclude that the degree
of $P$ is at most two.

From now on, we consider the solution $u$ with at least one pole in $\C$.
Assume $z_0$ is a pole of $u$ with the Laurent expansion
\begin{eqnarray*}
u = \sum_{n=0}^{+\infty} u_n(z-z_0)^{n+p}, \quad u_0 \not = 0, - p\in \N.
\end{eqnarray*}
Because of condition ii), by substituting the above Laurent expansion
 into equation \eqref{third-order-equation}, we find that
\begin{itemize}

 \item [1)]  $p = -1 $,  $ u_0 = -\dfrac{6}{A+2 B}$;

\item [2)] the Fuchs indices (see Appendix A) of the equation \eqref{third-order-equation}  are
\begin{eqnarray*}
j= -1, j_2,j_3,
\end{eqnarray*}
where $j_{2,3} = \dfrac{7 A+8 B \pm \sqrt{25 A^2+16 A B-32 B^2}}{2 (A+2 B)}$.
\end{itemize}
Therefore, because of condition iii), all the $u_n, n\geq 1  $,  are uniquely determined by $u_0$.
This implies the existence of only one formal 
Laurent
expansion with a pole at $z_0$ that satisfies the equation  \eqref{third-order-equation} (see Appendix A).

Now we distinguish two cases.
\begin{itemize}

\item [{\it Case 1.}] $u$ is  a nonconstant rational function. 
%
%
%
%

  If $u$ has at least two poles, say at $z_1$ and $z_2 (z_1 \not = z_2)$,
	then both of $u_1 = u(z-z_0+z_1)$ and $u_2=u(z-z_0+z_2)$ are solutions of \eqref{third-order-equation} with a pole at $z_0$.
	As there is a unique Laurent expansion around $z_0$,
	we must have $u_1 \equiv u_2$ on $\C$,
	which implies $\forall z \in \C, \ u(z) = u(z + z_2 - z_1)$,
	and  $u$ is periodic.
	This is  a contradiction because a nonconstant rational function cannot be periodic.

  If $u$ has only one pole and condition ii) is enforced, then
 \begin{eqnarray} \label{step1-2}
u(z) = \dfrac{u_0}{z-z_0} + P(z), 
\end{eqnarray}
 where $P(z)$ is a polynomial.
To further prove $\deg(P)\leq 2$, one may substitute  \eqref{step1-2} into the equation \eqref{third-order-equation}
and consider  the coefficient of the term with top degree in $z$.

 \item [{\it Case 2.}] $u$ is  a transcendental meromorphic function. 

 According to Lemma \ref{infinitely many poles}, $u$ has infinitely many poles on $\C$. With the same argument as that in {\it Case 1}, we    conclude that $u$ is periodic.
As the set of all poles of $u$ is $\{z_0 + w | w \in \Gamma\}$,
where $\Gamma$ is a non-trivial discrete subgroup of $(\C, +)$ \cite[p.~57]{Jones1987Singerman},
we conclude that $u$ is either a doubly periodic (elliptic) function or a simply periodic function.

If $u$ is elliptic, then using the same argument as in {\it Case 1} it must have only one pole in the fundamental parallelogram and this pole is known to be simple.
This is impossible because the sum of residues of all poles inside the fundamental parallelogram of an elliptic function is zero.

Hence $u$ is simply periodic, and again it must have only one pole in the period stripe and this pole is  simple.
Then $u$ can be represented as $h(e^{k z})$, where $k \in \C^*$ and $h$ is a meromorphic function on $\C^* $ which has only one simple pole on $\C^*$.

Let us now prove by contradiction that $h$ is rational.
  Suppose $h$ has an essential singularity at infinity.
If we let $\zeta = e^{k z}$, the equation \eqref{third-order-equation} becomes
 \begin{eqnarray}
\label{transformed equation}
A \zeta ^2 h'^2+B h \left( \zeta ^2 h''+ \zeta  h' \right) = k \zeta ^3 h^{(3)}+   3k \zeta ^2h'' + \dfrac{(k^2   +\beta )}{k}\zeta  h' + \dfrac{\gamma}{k^2} h+ \dfrac{\delta}{k^2},
 \end{eqnarray}
  where $\alpha$ is set to 0 by a translation in $u$ (or $h$). As $h$ has   only one pole in $\C^*$, then there exists $L>0$ such that $h$ is analytic in $D = \{ \zeta \in \C:  |\zeta| \geq L \}$.
 Let $R = \max_{|\zeta| = L}|h(\zeta)|$. Since $h$ has  an essential singularity at infinity, by the big Picard theorem, there exists some $\xi_0$ with $|\xi_0|>L$ such that $|h(\xi_0)| >R$, so the set $S = \{\xi \in \C:|h(\xi)|>R, |\xi|>L\}$ is non-empty.  Let $\Omega$ be a component of the set $S$.  Since $h$ has an essential singularity at infinity, $\C \backslash \Omega$ must be unbounded. We claim that $\Omega$ is also unbounded and hence $\Omega$ is a direct tract of $h$.  If $\Omega$ is bounded, then $|h(\xi)|=R$ on $\partial\Omega$ while $|h(\xi)|>R$ in $\Omega$ which contradicts the maximum modulus principle.
Apply Theorem \ref{Wiman-Valiron-mero1} to the equation \eqref{transformed equation} with $\zeta = \zeta_r$,
then, as $r \rightarrow \infty, r \not \in F$, we obtain
\begin{eqnarray*}
&&A\zeta_r^2 \left( \dfrac{a(r)}{\zeta_r} \right)^2  h(\zeta_r)^2 (1 + o(1))
\\
&&+ B h(\zeta_r) \left[ \zeta_r^2 \left( \dfrac{a(r)}{\zeta_r} \right)^2   h(\zeta_r)  +  \zeta_r \left( \dfrac{a(r)}{\zeta_r} \right)   h(\zeta_r) \right](1 + o(1))\\
&=& k \zeta_r^3 \left( \dfrac{a(r)}{\zeta_r} \right)^3   h(\zeta_r) (1 + o(1)) +  3 k  \zeta_r^2  \left( \dfrac{a(r)}{\zeta_r} \right)^2   h(\zeta_r) (1 + o(1))+
\\
&& \dfrac{(k^2  +\beta )}{k} \zeta_r    \left( \dfrac{a(r)}{\zeta_r} \right)   h(\zeta_r) (1 + o(1)) +\dfrac{\gamma}{k^2}  h(\zeta_r) + \dfrac{\delta}{k^2}.
\end{eqnarray*}
With similar argument as  in the proof of Lemma \ref{infinitely many poles},  for any $\varepsilon \in (0, 1)$, when $r   \not \in F$ is sufficiently large, the inequality
\begin{eqnarray*}
\left|A+B    \right| a^2(r)    M_h(r)  (1 + o(1)) \leq M_h^{\varepsilon}(r)
\end{eqnarray*}
holds, where $M_h(r)= \max \{|h(\zeta)|:|\zeta|=r, \zeta \in \Omega \}$, and further by \eqref{Fuchs's result},  we have
\begin{eqnarray*}
A + B = 0,
\end{eqnarray*}
and thus, we get a contradiction.

 When $h$ has an essential singularity at $\zeta = 0$, we may let $\eta = e^{-k z}$ and consider the function $g(\eta) = h(1/\eta)$. Then $g$ is a meromorphic function on $\C^* $ with an essential singularity at infinity and again similar arguments as above lead to a contradiction. As a consequence, neither $0$ nor infinity is an essential singularity of $h$, and hence $h$   is a rational function of the form
 \begin{eqnarray*}
h(\zeta) = \dfrac{h_0}{\zeta-\zeta_0} + P_1(\zeta) + P_2 (1/\zeta), \quad h_0, \zeta_0 \in \C^*, 
\end{eqnarray*}
where $P_1 $ and $ P_2$ are polynomials, and $u$ is expressed as
 \begin{eqnarray*}
u(z) = h(e^{kz})=  \dfrac{h_0}{e^{kz}-\zeta_0} + P_1(e^{kz}) + P_2 (e^{-kz}), \quad k, h_0, \zeta_0 \in \C^*. 
\end{eqnarray*}

Denote by \begin{eqnarray*}
            P_1(w) &=& c_{n} w^{n}  +\cdots +  c_{1} w  +c_0,\\
            P_2(w) &=& d_{m} w^{m}  +\cdots +  d_{1} w  +d_0,
          \end{eqnarray*}
          where  $c_{n}, d_{m} \in \C^*$ and $ n, m \in \N \cup \{0\}$ are the degrees of $P_1 $ and $P_2 $ respectively.

Assume $m \geq 1$, then as $\zeta \rightarrow 0,$ from \eqref{transformed equation}, we have
\begin{eqnarray*}
    && A \zeta^2 \left( -m \dfrac{d_m}{\zeta^{m+1}} \right)^2 +B  \dfrac{d_m}{\zeta^{m}} \left[ \zeta^2 m (m+1) \dfrac{d_m}{\zeta^{m+2}}  + \zeta(-m) \dfrac{d_m}{\zeta^{m+1}} \right] +O(\zeta^{-2m+1}) =  O(\zeta^{-m})
\end{eqnarray*}
and it can be simplified as
\begin{eqnarray*}
  m^2 d_m^2 (A+B) \zeta^{-2m} + O(\zeta^{-2m+1}) = O(\zeta^{-m})
\end{eqnarray*}
which contradicts to $A+B \not = 0$. Hence, $m = 0$ and $P_2$ is a constant.

Suppose $n\geq 2$, then   when $\zeta $ tends to infinity, from \eqref{transformed equation}, we have
\begin{eqnarray*}
    && A \zeta^2 \left( n  c_n \zeta^{n-1} \right)^2 +B  c_n\zeta^{n} \left[ \zeta^2 n (n-1)  c_n \zeta^{n-2}   + \zeta n c_n \zeta^{n-1} \right] +O(\zeta^{2n - 1})=  O(\zeta^{n})
\end{eqnarray*}
which implies that
\begin{eqnarray*}
 n^2 c_n^2 (A+B) \zeta^{2n} + O(\zeta^{2n-1}) = O(\zeta^{n}).
\end{eqnarray*}
Again, it contradicts to $A+B \not = 0$. Similarly, one may also show that $n \not = 1$, so we conclude that $P_1$ is also a constant and $u$ is expressed as
\begin{eqnarray*}
  u(z) &=& \dfrac{h_0}{e^{k z} - \zeta_0} + c_0, \quad  h_0, k, \zeta_0 \in \C^*, c_0 \in \C.
\end{eqnarray*}
This completes the proof of Theorem \ref{Main Theorem}.

 \end{itemize}


\section{Conclusion} \label{conclusion}

In this paper, we studied the third order ODE \eqref{third-order-equation} which includes the   Falkner-Skan equation and the \Cheng\ equation as special cases. For the generic case (see Definition \ref{generic cases}), all meromorphic solutions of this equation were derived by using complex analytic methods. Certain non-generic cases of equation \eqref{third-order-equation} with the Painlev\'{e} property were displayed as well. Our results show that, to find other closed-form solutions of equation \eqref{third-order-equation} in the future, one can either study meromorphic solutions of the remaining non-generic cases or focus on non-meromorphic solutions which are expected to have more complicated singularities than poles.  

\section*{Declaration of Competing Interest}

The authors declare that there is no competing interest.

\section*{Acknowledgments}

The first and second authors were partially supported by PROCORE - France/Hong Kong joint research grant, F-HK39/11T
 and RGC grant 17301115. The first and third authors were partially  supported by the National Natural Science Foundation of China (grant nos. 11701382 and 11971288).
The third author was partially supported by Guangdong Basic and Applied Basic Research Foundation, China (grant no. 2021A1515010054).

 \section*{Appendix A}
In this appendix, we give a brief summary on Painlev\'{e} test.

 Let  $I = (i_0, i_1, \dots, i_n), i_k \in \N \cup \{ 0\}, 0 \leq k \leq n$ and
 \begin{equation*}
H(y,y',\cdots,y^{(n)}) = \sum_{I \in \Lambda  } c_I y^{i_0} (y')^{i_1} \cdots (y^{(n)})^{i_n}, y = y(z), c_I  \in \C \backslash \{0\}.
\end{equation*}
If $y = (z - z_0)^p , -p \in \N$, then
 \begin{equation*}
H(y,y',\cdots,y^{(n)}) =  \sum_{I \in \Lambda } C_I (z - z_0)^{\alpha_I},
\end{equation*}
where $C_I \in \C, \alpha_I =  i_0 p + i_1 (p - 1) + \cdots + i_n (p - n)$.

Next, let $A$ be the set of those negative integers $p$
such that $\min_{I \in \Lambda} \alpha_I$ is attained by at least two $I$'s.  For each $p \in A$, denote by $\Lambda' = \{I' \in \Lambda | \alpha_{I'} = \min_{I \in \Lambda} \alpha_I
\}$ and then we define the {\bf dominant terms} for each $p \in A$ to be
 \begin{equation*}
\hat{E} = \sum_{I \in \Lambda'  } c_I y^{i_0} (y')^{i_1} \cdots (y^{(n)})^{i_n} .
\end{equation*}

Suppose $u(z) = \sum_{n=0}^{+\infty} u_n(z-z_0)^{n+p}(u_0 \not = 0, - p\in \N)$ with a pole at $z= z_0$ is a meromorphic solution of
\begin{equation} \label{algebraic ODE}
H(y,y',\cdots,y^{(n)})=0.
\end{equation}
Then if we  plug $y = u(z)$ into $H$, we will get an expression of the form $E = \sum_{j = 0}^{+\infty}E_j \chi^{j+q} = 0$,
  where $\chi = z-z_0, E_j \in \C$.
Since $y = u(z)$ is a solution of $H = 0$, we must have $E_j = 0$, for all $j \in \N$.

On the other hand, for $j = 1, 2, \dots$,
we can express $E_
j  $ as:
\begin{equation} \label{recursion-painleve test}
E_j \equiv P(u_0; j)u_j + Q_j(\{u_l | l < j\})  ,
\end{equation}
where $P(u_0; j)$ is a polynomial in $j$ determined by $u_0$ and $Q_j$ is a polynomial in $j$ with coefficients in $u_l ( l < j)$.
In fact, it is known that \cite{Darboux1883} (see also  \cite[p.~15]{Conte1999})
\begin{equation} \label{indicial equation}
P(u_0; j) = \lim_{\chi \rightarrow 0} \chi^{-j-q} \hat{E}'( u_0\chi^{p})\chi^{j+p},
\end{equation}
where $\hat{E}'(u)$ is defined by
\begin{equation}
\hat{E}'(u) v :=  \lim_{\lambda \rightarrow 0} \dfrac{ \hat{E}(u + \lambda v) - \hat{E}(u) }{\lambda}.
\end{equation}
In order to have $E_j = 0$ for all $j \in \N$,
we must have for each $j$, either
\begin{itemize}
\item [1)] $u_j$ is uniquely determined by $P(u_0; j)$ and $Q_j$, {\it or}

\item [2)] both  $P(u_0; j)$ and $Q_j$  vanish,

\end{itemize}
otherwise there is no meromorphic function satisfying $H(y,y',\cdots,y^{(n)}) = 0$.

Therefore if the polynomial $P(u_0; j)$ in $j$ does not have any nonnegative integer root, then each $u_j$ is uniquely determined by $P(u_0; j)$ and $Q_j$.

 \begin{definition}
 The zeros of $P(u_0; j)$ are defined to be the {\bf Fuchs indices} of the equation $H(y,y',\cdots,y^{(n)}) = 0$ and the {\bf indicial equation} is defined as  $P(u_0; j) = 0$.
\end{definition}

 \begin{definition}
The ODE \eqref{algebraic ODE}  is said to pass the   Painlev\'{e} test  if all its Fuchs indices are distinct integers, and  at every positive integer Fuchs index $j$, the condition $Q_{j}=0
$ is obeyed.

\end{definition}

{\it E-mail address:} robert.conte@cea.fr

{\it E-mail address:} ntw@maths.hku.hk

{\it E-mail address:} cfwu@szu.edu.cn

\end{document}